\title{On the least square-free primitive root modulo $p$}
\author{Stephen D. Cohen \\
  School of Mathematics and Statistics, \\
  University of Glasgow, Scotland \\
  Stephen.Cohen@glasgow.ac.uk
and\\ \\
Tim Trudgian\footnote{Supported by Australian Research Council DECRA Grant DE120100173.}\\
Mathematical Sciences Institute\\ The Australian National University,
 ACT 0200, Australia\\ timothy.trudgian@anu.edu.au
}
\newtheorem{thm}{Theorem}
\newtheorem{Lem}{Lemma}
\newtheorem{conj}{Conjecture}
\begin{document}
\maketitle
\begin{abstract}
\noindent
Let $g^{\square}(p)$ denote the least square-free  primitive root modulo $p$. We show that $g^{\square}(p)< p^{0.96}$ for all $p$.
\end{abstract}
\section{Introduction}
Let $\hat{g}(p)$ denote the least  prime primitive root modulo $p$. It is not known whether $\hat{g}(p)<p$ for all $p$, or even for all sufficiently large $p$. The best unconditional result is due to Ha \cite{Ha}, namely, that $\hat{g}(p) \ll p^{3.1}$. On the Generalised Riemann Hypothesis it is known \cite{Shoup} that $\hat{g}(p) \ll (\log p)^{6+ \epsilon}$, and, recently, it was shown in \cite{MTT} that $\hat{g}(p) < \sqrt{p} - 2$ for all $p> 2791$.

Rather than consider prime primitive roots, we consider the broader (and easier) case of square-free primitive roots. An integer $n$ is said to be \emph{square-free}  if for all primes $l|n$ we have $l^{2}\nmid n$. Let $g^{\square}(p)$ denote the least square-free primitive root modulo $p$,
and let $N^{\square}(p,x)$ denote the number of square-free primitive roots modulo $p$ that do not exceed $x$.
Shapiro \cite[p.\ 355]{Shapiro} showed that
\begin{equation}\label{one}
N^{\square}(p,x) = \frac{\phi(p-1)}{p-1} \left\{ \frac{6}{\pi^{2}}x + O( 2^{\omega(p-1)} p^{1/4} (\log p)^{1/2} x^{1/2}\right\}.
\end{equation}
This shows that $N^{\square}(p, p^{1/2 + \epsilon})>0$ for any positive $\epsilon$ and for all sufficiently large $p$. Equivalently, this means that $g(p) \ll p^{1/2 + \epsilon}$.

The error term in (\ref{one}) has been  improved by Liu and Zhang \cite[Thm 1.1]{LiuZhang05}, who showed
\begin{equation}\label{two}
N^{\square}(p,x) = \frac{\phi(p-1)}{p-1} \left\{ \frac{6}{\pi^{2}}x + O\left( p^{9/44 + \epsilon} x^{1/2 + \epsilon}\right) \right\},
\end{equation}
whence one has that $g^{\square}(p)\ll p^{9/22 + \epsilon}$. 
Rather than focus on (\ref{two}) we seek a version of (\ref{one}) in order to bound $g^{\square}(p)$ explicitly. We do this in the following theorem.
\begin{thm}\label{Main}
$g^{\square}(p) < p^{0.96}$ for all $p$. In particular all primes $p$  possess a square-free primitive root less than $p$.
\end{thm}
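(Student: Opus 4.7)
The plan is to make Shapiro's estimate \eqref{one} fully explicit and then show that its main term dominates the error once $x=p^{0.96}$. I would begin by writing the indicator of primitive roots mod $p$ via the Vinogradov--M\"obius identity over multiplicative characters,
\[
\mathbf{1}_{\mathrm{PR}}(n) = \frac{\phi(p-1)}{p-1}\sum_{d\mid p-1}\frac{\mu(d)}{\phi(d)}\sum_{\chi\ \text{of order } d}\chi(n),
\]
together with $\mu^{2}(n)=\sum_{k^{2}\mid n}\mu(k)$ for the square-free indicator. This lets me rewrite
\[
N^{\square}(p,x) = \frac{\phi(p-1)}{p-1}\sum_{d\mid p-1}\frac{\mu(d)}{\phi(d)}\sum_{\chi\ \text{of order } d}\sum_{k\le\sqrt{x}}\mu(k)\,\chi(k^{2})\!\!\sum_{m\le x/k^{2}}\chi(m).
\]
The $d=1$ term produces the main contribution $\frac{\phi(p-1)}{p-1}\bigl(\tfrac{6}{\pi^{2}}x+O(\sqrt{x})\bigr)$, and everything else must be shown to be smaller.

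For each nonprincipal $\chi$ I would control the inner sum $\sum_{m\le y}\chi(m)$ by an explicit P\'olya--Vinogradov bound of the shape $|{\sum_{m\le y}\chi(m)}|\le c\sqrt{p}\log p$ with an explicit $c$ (for example, those recorded by Pomerance or Frolenkov--Soundararajan). Splitting the sum over $k$ at a parameter $K$, applying P\'olya--Vinogradov when $k\le K$ and the trivial bound $x/k^{2}$ when $k>K$, and noting that at most $2^{\omega(p-1)}$ square-free divisors $d$ contribute, the total error is bounded by
\[
\frac{\phi(p-1)}{p-1}\,2^{\omega(p-1)}\!\left(cK\sqrt{p}\log p+\frac{x}{K}\right).
\]
Optimising $K\asymp\sqrt{x/(\sqrt{p}\log p)}$ recovers an explicit version of \eqref{one}.

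To force the main term to beat the error at $x=p^{0.96}$, the condition reduces (after cancelling the common factor $\phi(p-1)/(p-1)$) to an inequality of the form
\[
2^{\omega(p-1)}\,(\log p)^{1/2}\ <\ c'\,p^{0.23},
\]
which I would verify by inserting an explicit upper bound on $2^{\omega(n)}$ (such as a Nicolas--Robin or Robin-type bound for the divisor function, using $2^{\omega(n)}\le d(n)$). This handles all $p$ beyond some computable threshold $p_{0}$. The main obstacle is therefore the finite residual range $p\le p_{0}$: the bound on $2^{\omega(p-1)}$ is only effective when $p$ is reasonably large, and the threshold $p_{0}$ coming from a crude Robin-type bound may be quite substantial. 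I would finish by either (i) exhibiting a square-free primitive root below $p^{0.96}$ for every prime in $[3,p_{0}]$ by direct computation, or (ii) using a sharper sieve refinement (replacing trivial divisor bounds by case-by-case analysis of the factorisation of $p-1$) to lower $p_{0}$ to a range where direct verification is feasible.
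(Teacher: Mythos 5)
Your first half is essentially the paper's own first step: the same character decomposition combined with $\mu^{2}(n)=\sum_{k^{2}\mid n}\mu(k)$, the P\'olya--Vinogradov/trivial-bound splitting (the paper's (\ref{clement})--(\ref{attlee})), Cipu-type control of the square-free count, and the resulting sufficient condition that $p^{0.23}$ beat a constant times $2^{\omega(p-1)}(\log p)^{1/2}$, which is exactly (\ref{callaghan}). The genuine gap is the part you yourself label ``the main obstacle''. A uniform Robin-type bound on $2^{\omega(p-1)}$ produces a threshold $p_{0}$ far beyond $10^{20}$, and even the sharper per-$\omega$ comparison of the threshold against the primorial lower bound $p-1\geq 2\cdot 3\cdots p_{\omega}$ only succeeds for $\omega(p-1)\geq 30$: for instance, with $\omega(p-1)=13$ the crude criterion needs $p\gtrsim 10^{23}$ while such $p$ exist from about $3\cdot 10^{14}$ onward, leaving a window of nine orders of magnitude. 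So your fallback (i), direct verification up to $p_{0}$, is computationally hopeless, and your fallback (ii) is precisely the missing mathematical content rather than a routine refinement.

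What the paper actually does to close this range has three ingredients, none of which your proposal supplies. First, a sieving theorem (Lemma \ref{helpusall} and Theorem \ref{Harold}, after Lemma 3.1 of \cite{COT}): writing $\mathrm{Rad}(p-1)=kp_{1}\cdots p_{s}$ with a small core $k$, the factor $2^{\omega(p-1)+1}$ in the criterion is replaced by $2^{\omega(k)+1}\Delta$ with $\Delta=(s-1)/\delta+2$, which collapses the thresholds for $8\leq\omega(p-1)\leq 29$ down to roughly $10^{15}$--$10^{19}$, where the primorial bound takes over. Second, the external computation of \cite{COT} that every prime $2791<p<2.5\cdot 10^{15}$ has a \emph{prime} (hence square-free) primitive root below $\sqrt{p}-2$, which disposes of $1\leq\omega(p-1)\leq 7$ and of the sub-$2.5\cdot 10^{15}$ tails left by the sieve; without some such large-scale computation even the sieved thresholds cannot be met. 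Third, a dedicated argument for $\omega(p-1)=13$, where a window $[2.5\cdot 10^{15},\,3.34\cdot 10^{15}]$ survives: one forces $2\cdot 3\cdots 23\mid p-1$ by successively improving $\delta$, enumerates the $518$ qualifying primes, recomputes $\delta$ exactly to leave $25$ candidates, and checks those directly. As written, your proposal establishes the statement only for $p$ beyond an astronomically large, uncomputable-in-practice threshold, and the route from there to ``for all $p$'' is not sketched in a way that could be completed.
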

We note that using (\ref{one}) does not allow one to show that $g^{\square}(p) \ll p^{1/2}$. However, based on computational evidence, the bound in (\ref{two}) and recent work in \cite{COT, MTT} it seems reasonable to extrapolate, as below.
\begin{conj}
For all $p> 409$ we have $g^{\square}(p) < \sqrt{p} - 2$.
\end{conj}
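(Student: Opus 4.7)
The plan is to mimic the strategy of \cite{MTT} --- which settled the analogous conjecture for prime primitive roots with threshold $p > 2791$ --- while exploiting the fact that the square-free indicator $\sum_{k^{2} \mid n} \mu(k)$ is combinatorially simpler than the primality indicator. Combining it with the standard character expression for the primitive-root indicator gives
$$N^{\square}(p,x) = \frac{\phi(p-1)}{p-1} \sum_{d \mid p-1} \frac{\mu(d)}{\phi(d)} \sum_{\mathrm{ord}(\chi)=d}\; \sum_{k \leq \sqrt{x}} \mu(k)\,\chi(k)^{2} \sum_{m \leq x/k^{2}} \chi(m),$$
from which the $d=1$ term extracts a main term of size $\tfrac{6}{\pi^{2}}\cdot\tfrac{\phi(p-1)}{p-1}\,x$ and everything else is to be absorbed into an error.

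Next I would apply a Cohen-type prime sieve (as developed in \cite{COT}) to the outer sum, trading the potentially large factor $2^{\omega(p-1)}$ for a sieving constant $\delta = 1 - \sum_{q \in S} 1/q$, where $S$ is a carefully chosen set of prime divisors of $p-1$. This collapses the problem to bounding character sums for characters of at most $|S|$ distinct prime orders, at the cost of a manageable multiplicative loss $\delta^{-1}(|S|-\delta)$.

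Third, I would estimate the remaining inner sums via an explicit Burgess bound with sharp constants (e.g.\ those of Trevi\~no or Francis): for non-principal $\chi \pmod{p}$ and integer $r \geq 2$,
$$\Bigl|\sum_{m \leq y} \chi(m)\Bigr| \leq C_{r}\, y^{\,1-1/r}\, p^{(r+1)/(4r^{2})} (\log p)^{1/(2r)}.$$
Summing over $k \leq \sqrt{x}$ (the convergent tail $\sum k^{-2(1-1/r)}$ with $r \geq 3$ is harmless), setting $x = \sqrt{p} - 2$, and optimising in $r$ (likely $r \in \{2,3,4\}$), one hopes for a strict inequality $N^{\square}(p,\sqrt{p}-2) > 0$ for all $p$ exceeding an explicit threshold $p_{0}$. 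The conjecture would then be completed by a direct computer verification of $g^{\square}(p) < \sqrt{p}-2$ for $409 < p \leq p_{0}$.

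The principal obstacle is quantitative: at $x \asymp \sqrt{p}$, the Burgess exponent on $p$ in the error is $\tfrac{1}{2} - \tfrac{1}{4r} + \tfrac{1}{4r^{2}}$ --- only barely less than the main-term exponent $\tfrac{1}{2}$. The entire saving is a small fractional power of $p$ coupled with logarithmic factors, so sharpness of both the Burgess constants $C_{r}$ and of the sieve constant $\delta$ is essential; primes $p$ with $\omega(p-1)$ very small (so that the sieve is weak) or very large (so that $\delta$ is small) may need to be handled separately, the former via P\'olya--Vinogradov and the latter by enlarging $S$. Keeping $p_{0}$ within computational reach --- ideally below the range already searched in \cite{MTT} --- is almost certainly the hardest step, and will likely require smoothed rather than sharp cutoffs in the counting function together with painstaking numerical optimisation.
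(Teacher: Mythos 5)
You should first note that the statement you are proving is not a theorem of the paper at all: the authors state it explicitly as a \emph{conjecture}, supported only by computational evidence and by analogy with the results of \cite{COT, MTT}, and they offer no proof. So there is no ``paper's proof'' to compare against, and your proposal must be judged on its own as an attempted resolution of an open problem. Judged that way, it is a sensible research programme but not a proof: the decisive step --- producing an explicit threshold $p_{0}$ and then verifying the conjecture for $409 < p \leq p_{0}$ --- is precisely the step you defer, and you concede yourself that it is ``almost certainly the hardest step.'' A sketch that ends with ``one hopes for a strict inequality'' has a genuine gap exactly where the difficulty lives.

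Two concrete quantitative points make that gap serious. First, at $x \asymp \sqrt{p}$ the Burgess saving over the main term is $p^{-(r-1)/(4r^{2})}$, which is maximised at $r=2$ and equals $p^{-1/16}$; against explicit Burgess constants, the factor $(\log p)^{1/(2r)}$, and the sieve loss $\delta^{-1}(|S|-\delta)$, a saving of $p^{1/16}$ typically forces $p_{0}$ far beyond any range in which one could check all primes individually (the searched range in \cite{COT} stops at $2.5\cdot 10^{15}$, and the present paper's unconditional exponent is only $0.96$, not $0.5$). Second, your treatment of the $k$-sum is too quick: the Burgess bound applied to $\sum_{m \leq x/k^{2}} \chi(m)$ is worse than the trivial bound $x/k^{2}$ once $x/k^{2}$ drops below roughly $p^{(r+1)/(4r)}$, which with $x \asymp \sqrt{p}$ and $r=2$ happens already for $k \gg p^{1/16}$; you must therefore split the $k$-range, bound the tail trivially by $x/K$, and balance $K$ against the Burgess saving. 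This is doable, but it erodes the already thin margin and must be carried out explicitly before any claim of a proof can be made. As it stands, your argument establishes at most that the conjecture is plausibly attackable by Burgess-plus-sieve methods --- which is consistent with the authors' reasons for believing it --- but it does not prove it.
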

The outline of this paper is as follows. In \S \ref{set} we collect the necessary results to make (\ref{one}) explicit. In \S \ref{sieve} we introduce a sieving inequality. We also carry out some rudimentary computations, which  prove Theorem \ref{Main}. Finally, in \S \ref{conch} we discuss a related problem on square-full primitive roots. Throughout this article we write $n= \square-\textrm{free}$ to indicate that $n$ is a square-free integer.

\section{Preliminary Results}\label{set}
The following establishes an indicator function on primitive roots.
\begin{equation}\label{confidence}
f(x):= \frac{\phi(p-1)}{p-1} \sum_{d|p-1} \frac{\mu(d)}{\phi(d)} \sum_{x\in \Gamma_{d}} \chi(n) = \begin{cases} 1 &\textrm{if $n$ is a primitive root mod $p$}\\  0 & \textrm{otherwise}.
\end{cases}
\end{equation}
We therefore have
\begin{equation}\label{ramsey}
\begin{split}
N^{\square}(p,x) &= \sum_{\substack{n\leq x\\ n= \square-\textrm{free}}} f(x)\\
&= \frac{\phi(p-1)}{p-1} \left\{ \sum_{\substack{n\leq x \\ n= \square-\textrm{free}}}1 + \sum_{\substack{d|p-1\\ d>1}} \frac{\mu(d)}{\phi(d)} \sum_{x\in\Gamma_{d}} \sum_{\substack{n\leq x\\ n= \square-\textrm{free}}} \chi(n)\right\}.
\end{split}
\end{equation}
Now, to estimate the inner-most sum in (\ref{ramsey}) we write
\begin{equation*}
\sum_{\substack{n\leq x \\n= \square-\textrm{free}}} \chi(n) = \sum_{n\leq x} \chi(n) \sum_{d^{2}|n} \mu(d)
= \sum_{d\leq \sqrt{x}} \mu(d) \sum_{\substack{n\leq x \\ n\equiv 0\, (\mathrm{mod}\ d^{2})}} \chi(n).
\end{equation*}
Upon taking absolute values and using the bound
\begin{equation}\label{clement} 
\bigg|\sum_{\substack{n\leq x \\ n\equiv 0\, (\mathrm{mod}\ d^{2})}} \chi(n)\bigg| \leq \min\left\{ \frac{x}{d^{2}}, \sqrt{p} \log p\right\},
\end{equation}
we find that
\begin{equation}\label{attlee}
\bigg|\sum_{\substack{n\leq x\\ n= \square-\textrm{free}}} \chi(n)\bigg| \leq 2 x^{1/2} p^{1/4} (\log p)^{1/2}.
\end{equation}
Finally, we need an estimate on the number of square-free numbers not exceeding $x$, which we borrow from \cite[Lemma 4.2]{Cipu}.
\begin{Lem}[Cipu]\label{james}
For all $x\geq 1$ we have
\begin{equation*}
\sum_{\substack{n\leq x\\ n= \square-\textrm{free}}}1 \geq \frac{6}{\pi^{2}} x - 0.104 \sqrt{x}.
\end{equation*}
\end{Lem}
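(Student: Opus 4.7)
The plan is to start from the standard Möbius identity $\mu^{2}(n)=\sum_{d^{2}\mid n}\mu(d)$, which converts the counting function into
\[
Q(x) \;:=\; \sum_{\substack{n\le x\\ n=\square\text{-free}}}1 \;=\; \sum_{d\le\sqrt{x}}\mu(d)\Bigl\lfloor \frac{x}{d^{2}}\Bigr\rfloor.
\]
Writing $\lfloor x/d^{2}\rfloor = x/d^{2}-\{x/d^{2}\}$, and using $\sum_{d=1}^{\infty}\mu(d)/d^{2}=6/\pi^{2}$, I would split $Q(x)$ as
\[
Q(x) \;=\; \frac{6}{\pi^{2}}x \;-\; x\sum_{d>\sqrt{x}}\frac{\mu(d)}{d^{2}} \;-\; \sum_{d\le\sqrt{x}}\mu(d)\Bigl\{\frac{x}{d^{2}}\Bigr\}.
\]
The lemma is then the statement that the two remainder sums combine to something no worse than $0.104\sqrt{x}$ in the downward direction.

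The next step would be to obtain an explicit bound on the tail $x\sum_{d>\sqrt{x}}\mu(d)/d^{2}$. I would do this by partial summation applied to the Mertens function $M(t)=\sum_{n\le t}\mu(n)$: one gets
\[
\sum_{d>N}\frac{\mu(d)}{d^{2}} \;=\; -\frac{M(N)}{N^{2}} \;+\; 2\int_{N}^{\infty}\frac{M(t)}{t^{3}}\,dt,
\]
so an explicit bound of the shape $|M(t)|\le c\,t$ (or, better, sharper explicit estimates from tables of $M(t)$ for small $t$) yields a corresponding explicit bound on the tail. The fractional-part sum $\sum_{d\le\sqrt{x}}\mu(d)\{x/d^{2}\}$ is trivially bounded by the number of squarefree $d\le\sqrt{x}$, giving roughly $\tfrac{6}{\pi^{2}}\sqrt{x}$, but the fact that only \emph{squarefree} $d$ contribute and that the fractional parts average to $\tfrac{1}{2}$ can be exploited to sharpen this further.

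To reach the explicit constant $0.104$ I would combine the two estimates above for $x$ larger than some modest threshold $x_{0}$ and then verify the inequality by direct computation for all $x\le x_{0}$. Since $Q(x)$ is a step function that only jumps at squarefree integers, a finite check suffices: one tabulates $Q(n)$ for squarefree $n\le x_{0}$ and confirms that $Q(n)\ge \tfrac{6}{\pi^{2}}n-0.104\sqrt{n}$ at each such $n$ (the inequality for non-integer $x$ then follows because the right-hand side is increasing in $x$ while the left-hand side is constant between jumps and increases by $1$ at each squarefree integer).

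The main obstacle is producing an error bound tight enough to yield the numerical constant $0.104$ rather than the cruder $2\sqrt{x}$ that the triangle inequality alone gives; this requires carefully tuned explicit estimates for $M(t)$ in the relevant range and a non-trivial transition value $x_{0}$ so that the computational verification remains feasible. Since the result is quoted verbatim from Cipu \cite{Cipu}, the actual execution of this plan is what is carried out there, and we simply cite it.
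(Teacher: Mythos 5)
Your treatment matches the paper's: Lemma \ref{james} is not proved here but simply quoted from Cipu \cite[Lemma 4.2]{Cipu}, which is exactly what you do, and your accompanying sketch (M\"obius decomposition $\sum_{d\le\sqrt{x}}\mu(d)\lfloor x/d^{2}\rfloor$, explicit tail and fractional-part estimates, finite verification for small $x$) is the standard and correct route by which such an explicit constant is obtained. No gap to report.
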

While sharper estimates are known for $x\geq x_{0}>1$, the bound in Lemma \ref{james} is sufficient for our purposes.
Therefore, by (\ref{ramsey}) we have that $N^{\square}(p,x)>0$ if
\begin{equation}\label{callaghan}
G(x):= x^{1/2} p^{-1/4}-\frac{\pi^{2}}{6} \left(\frac{0.104}{p^{1/4}} + 2^{\omega(p-1) +1}(\log p)^{1/2}\right)>0.
\end{equation}
Setting $x= p^{0.96}$ in (\ref{callaghan}) shows that $G(p^{0.96})>0$ for all $p$ with $\omega(p-1) \geq 30$.  

Rather than consider all remaining cases of $\omega(p-1)$, we make use of the calculations in \cite[\S 4]{COT}, namely that
\begin{equation}\label{lloyd}
\hat{g}(p) < \sqrt{p} -2, \quad (2791<p < 2.5\cdot 10^{15}).
\end{equation} 
We use this to take care of the cases $1\leq \omega(p-1)\leq 7$. For example, when $\omega(p-1) = 7$ we find that $G(p^{0.96})>0$ for all $p> 5.5\cdot 10^{14}$. Hence we need only consider those $p\leq 5.5\cdot 10^{14}$, which are covered by (\ref{lloyd}). We continue in this way and dispatch the cases $1\leq \omega(p-1) \leq 7$.

\section{A sieving inequality and is consequences}\label{sieve}
\subsection{Sieving}
Given the prime $p$, let $e$ be any divisor of $p-1$.  Call an integer $n$ indivisible by $p$  \emph{$e$-free} if $n\equiv m^d (\mathrm{mod}\ p), d|e,$ for some integer $m$, implies $d=1$.
 (In particular, if $e=2$, be aware that, in this sense, being \emph{$2$-free} has a different meaning from being \emph{square-free}.) Observe that the definition $e$-free depends only on the distinct primes dividing $e$ and that, in particular, $n$ is a primitive root  if and only if $n$ is $(p-1)$-free.
  Extend the definition of the characteristic function (\ref{confidence}) as follows.
\begin{equation}\label{sconfidence}
f_e(x):= \frac{\phi(e)}{e} \sum_{d|e} \frac{\mu(d)}{\phi(d)} \sum_{x\in \Gamma_{d}} \chi(n) = \begin{cases} 1 &\textrm{if $n$ is $e$-free}\\  0 & \textrm{otherwise}.
\end{cases}
\end{equation}

Now, given $p$ and $x$ with $x<p$, let
 $N_e^{\square}(p,x)$ denote the number of square-free and $e$-free positive integers $n$ that do not exceed $x$.  Thus, $N^{\square}(p,x) =N_{p-1}^{\square}(x)$.
 From (\ref{sconfidence}) we have
 \begin{equation}\label{ramsay}
\begin{split}
N_e^{\square}(p,x) &= \sum_{\substack{n\leq x\\ n= \square-\textrm{free}}} f_e(x)\\
&= \frac{\phi(e)}{e} \left\{ \sum_{\substack{n\leq x \\n= \square-\textrm{free}}}1 + \sum_{d|e, d>1} \frac{\mu(d)}{\phi(d)} \sum_{x\in\Gamma_{d}} \sum_{\substack{n\leq x \\ n= \square-\textrm{free}}} \chi(n)\right\}.
\end{split}
\end{equation}

Next, let $k$ be a divisor of  $\mathrm{Rad}(p-1), $ the \emph{radical} of $p-1$ (i.e., the product of the distinct primes dividing $p-1$). Write
\begin{equation}\label{jeremy}
\mathrm{Rad}(p-1)=k p_1\cdots p_s,
\end{equation}
 where $1 \leq s \leq \omega(p-1)$ and $p_1, \ldots, p_s$ are distinct primes and the \emph{core} $k$ is the product of the $\omega(p-1)-s$
 smallest (distinct)  primes dividing $p-1$. We describe this situation as \emph{sieving with core $k$ and $s$ sieving primes}.
  A useful inequality for $N^{\square}(p,x)$ is now proved as in Lemma 3.1 of \cite{COT}.

 \begin{Lem}\label{helpusall}Given the prime $p$, assume that $(\ref{jeremy})$ holds. Then
 \begin{eqnarray*}N^{\square}(p,x) &  \geq &\sum_{i=1}^s N_{kp_i}^{\square}(p,x) - (s-1)N_{k}^{\square}(p,x)\\
 & =  &     \sum_{i=1}^s \left\{N_{kp_i}^{\square}(p,x)-\left(1- \frac{1}{p_i}\right) N_{k}^{\square}p,(x) \right\} + \delta N_{k}^{\square}(p,x),
 \end{eqnarray*}
 where
 \begin{equation}\label{corbyn}
 \delta= 1-\sum_{i=1}^s \frac{1}{p_i}.
 \end{equation}
\end{Lem}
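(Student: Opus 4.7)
The plan is to adapt the standard sieve inequality, modeled on Lemma~3.1 of \cite{COT}, to the square-free setting. The key observation is that, because $e$-freeness depends only on the distinct primes dividing $e$, the factorization \eqref{jeremy} implies that a positive integer $n\leq x$ coprime to $p$ is a primitive root modulo $p$ (i.e.\ is $(p-1)$-free) if and only if it is $kp_i$-free for every $i=1,\ldots,s$. Accordingly I would set
\[
A_i := \{n\leq x : n \text{ is square-free and } kp_i\text{-free}\}, \qquad B := \{n\leq x : n \text{ is square-free and } k\text{-free}\},
\]
so that, since $k\mid kp_i$, each $A_i\subseteq B$, and $|A_i|=N_{kp_i}^{\square}(p,x)$, $|B|=N_{k}^{\square}(p,x)$, $N^{\square}(p,x)=\bigl|\bigcap_{i=1}^s A_i\bigr|$.

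Next I would apply the elementary union bound
\[
\Bigl|B\setminus\bigcap_{i=1}^s A_i\Bigr| = \Bigl|\bigcup_{i=1}^s (B\setminus A_i)\Bigr| \leq \sum_{i=1}^s |B\setminus A_i| = \sum_{i=1}^s\bigl(|B|-|A_i|\bigr),
\]
where the last equality uses $A_i\subseteq B$. Rearranging gives $\bigl|\bigcap_i A_i\bigr| \geq \sum_i |A_i| - (s-1)|B|$, which, on substituting the identifications above, is precisely the first displayed inequality of the lemma.

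Finally, the equivalent form is purely algebraic: writing $s-1 = \sum_{i=1}^s(1-1/p_i) - \delta$ with $\delta$ as in \eqref{corbyn}, one obtains
\[
\sum_{i=1}^s N_{kp_i}^{\square}(p,x) - (s-1)N_{k}^{\square}(p,x) = \sum_{i=1}^s\Bigl\{N_{kp_i}^{\square}(p,x) - \bigl(1-\tfrac{1}{p_i}\bigr)N_{k}^{\square}(p,x)\Bigr\} + \delta\, N_{k}^{\square}(p,x),
\]
which yields the second form. I do not foresee a genuine obstacle: the whole argument is a one-line inclusion-exclusion bound followed by an elementary rearrangement. The only point requiring care is the reduction of $(p-1)$-freeness to simultaneous $kp_i$-freeness, which is guaranteed precisely by the fact that the notion of $e$-freeness depends only on the distinct prime divisors of $e$.
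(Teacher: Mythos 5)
Your proof is correct and is essentially the argument the paper intends: the paper simply cites Lemma~3.1 of \cite{COT}, whose proof is exactly this reduction of $(p-1)$-freeness to simultaneous $kp_i$-freeness followed by the union bound $\bigl|\bigcap_i A_i\bigr|\geq \sum_i |A_i|-(s-1)|B|$ and the elementary rearrangement via $\sum_{i=1}^s(1-1/p_i)=s-1+\delta$.
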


Now, by  (\ref{ramsay}),  (\ref{attlee})  and Lemma \ref{james}, we have
\begin{equation}\label{tony}
N_{k}^{\square}(p,x)\geq \frac{\phi(k)}{k}( x^{1/2}p^{1/4}) \left\{\frac{6}{\pi^{2}}(  x^{1/2}p^{-1/4})  - \frac{0.104}{p^{1/4}}-2^{\omega(k)+1}(\log p)^{1/2}\right\}.
\end{equation}
Similarly, for each $i=1, \ldots, s$, since $\frac{\phi(kp_i)}{kp_i}= \left(1- \frac{1}{p_i}\right)\frac{\phi(k)}{k}$, we have
\begin{equation}\label{blair}
\left|N_{kp_i}^{\square}(p,x)-\left(1- \frac{1}{p_i}\right)N_{k}^{\square}(p,x)\right|  \leq \frac{\phi(k)}{k}( x^{1/2}p^{1/4})\left(1- \frac{1}{p_i}\right)2^{\omega(k)+1}(\log p)^{1/2}
\end{equation}
(using also the fact that $2^{\omega(kp_i)} - 2^{\omega(k)}= 2^{\omega(k)}$).

\begin{thm}\label{Harold} Given the prime $p$, assume that $(\ref{jeremy})$ holds.  Suppose that $\delta$ (defined by $(\ref{corbyn})$) is positive and set
\begin{equation*} 
\Delta = \frac{s-1}{\delta}+2.
\end{equation*}
Suppose also that
\begin{equation} \label{brown}
G_s(x):= x^{1/2} p^{-1/4}-\frac{\pi^{2}}{6} \left(\frac{0.104}{p^{1/4}} + 2^{\omega(k) +1}\Delta(\log p)^{1/2}\right)>0 .
\end{equation}
Then $p$ possesses a square-free primitive root less than $x$.
\end{thm}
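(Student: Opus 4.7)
The plan is to show $N^{\square}(p,x) > 0$ whenever $G_s(x) > 0$, by combining the sieving inequality of Lemma \ref{helpusall} with the explicit bounds (\ref{tony}) and (\ref{blair}). First I would apply Lemma \ref{helpusall} to write
\[
N^{\square}(p,x) \geq \sum_{i=1}^s \left\{N_{kp_i}^{\square}(p,x)-\Bigl(1-\tfrac{1}{p_i}\Bigr) N_{k}^{\square}(p,x) \right\} + \delta\, N_{k}^{\square}(p,x).
\]
The second term is the ``main'' contribution, for which I would invoke (\ref{tony}) to get a lower bound, multiplied by $\delta > 0$. The first sum is the ``sieving error,'' for which each summand is bounded in absolute value by (\ref{blair}); the total contribution is therefore at least
\[
-\frac{\phi(k)}{k}\bigl(x^{1/2}p^{1/4}\bigr)\,2^{\omega(k)+1}(\log p)^{1/2}\sum_{i=1}^s\Bigl(1-\tfrac{1}{p_i}\Bigr).
\]

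The next step is the bookkeeping. Using the definition (\ref{corbyn}) of $\delta$, one has
\[
\sum_{i=1}^s\Bigl(1-\tfrac{1}{p_i}\Bigr) = s - (1-\delta) = s-1+\delta.
\]
Combining with the $-\delta\cdot 2^{\omega(k)+1}(\log p)^{1/2}$ term coming from (\ref{tony}), the total coefficient of $\frac{\phi(k)}{k}(x^{1/2}p^{1/4})\,2^{\omega(k)+1}(\log p)^{1/2}$ is exactly $-(s-1+2\delta)= -\delta\Delta$, matching the definition $\Delta = (s-1)/\delta + 2$. This is the numerical identity that motivates the particular form of $\Delta$ in the statement, and it is the only slightly delicate point in the argument.

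Putting everything together and factoring out $\delta\,\frac{\phi(k)}{k}\bigl(x^{1/2}p^{1/4}\bigr)\cdot \frac{6}{\pi^2}$ yields
\[
N^{\square}(p,x) \geq \delta\,\frac{\phi(k)}{k}\bigl(x^{1/2}p^{1/4}\bigr)\cdot\frac{6}{\pi^{2}}\,G_s(x),
\]
where $G_s(x)$ is precisely the quantity defined in (\ref{brown}). Since $\delta > 0$, $\phi(k)/k > 0$, and $x,p > 0$, the hypothesis $G_s(x) > 0$ forces $N^{\square}(p,x) > 0$, and therefore $p$ admits a square-free primitive root not exceeding $x$. The main obstacle is purely the algebraic verification that the error contributions assemble into the factor $\Delta$; once that identity is in place the conclusion is immediate from the already-proved lemmas.
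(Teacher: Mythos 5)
Your proposal is correct and follows essentially the same route as the paper: apply Lemma \ref{helpusall}, insert the bounds (\ref{tony}) and (\ref{blair}), and verify the algebraic identity $\sum_{i=1}^{s}\bigl(1-\tfrac{1}{p_i}\bigr)=s-1+\delta$ so that the error terms assemble into $\delta\Delta$ and the factor $\tfrac{6}{\pi^{2}}G_s(x)$ emerges. In fact your bookkeeping is stated more carefully than the paper's one-line proof, which even contains a typo at this point (it writes $s-1+\Delta$ where $s-1+\delta$ is meant).
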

\begin{proof} Apply $(\ref{tony})$ and $(\ref{blair})$ to Lemma $\ref{helpusall}$ and use the fact that $\sum_{i=1}^s\left(1-\frac{1}{p_i}\right)=s-1+\Delta$.

\end{proof}

\subsection{Application of Theorem \ref{Harold}}
As noted already, we need only consider $8\leq \omega(p-1) \leq 29$. For example, when $\omega(p-1) = 29$ the choice $s=26$ shows that (\ref{brown}) is true for all $p> 9.2\cdot 10^{18}$. Since $p-1$ has 29 distinct prime factors we have $p-1 \geq 2 \cdot 3 \cdots p_{29} > 2.8\cdot 10^{44}$, which proves Theorem \ref{Main} for $\omega(p-1) = 29$. We continue in this way, keeping in mind (\ref{lloyd}). Choosing $s= \omega(p-1) -3$ for  $14\leq \omega(p-1) \leq 29$ and $s=\omega(p-1) -2$ for  $8 \leq \omega(p-1) \leq 12$ completes the proof of Theorem \ref{Main} for every case except $\omega(p-1) = 13$.

When $\omega(p-1) =13$ we choose $s=10$, whence we require $p> 3.34\cdot 10^{15}$ to satisfy (\ref{brown}). Therefore one needs to check $p\in[2.5\cdot 10^{15}, 3.34\cdot 10^{15}]$.   Since $p$ is odd we have $2|p-1$; also, we must have $3|p-1$ since otherwise $p-1 \geq 2 \cdot 5 \cdots p_{14} > 4.3\cdot 10^{15}$, which satisfies (\ref{brown}). 

One can now proceed as in \cite[\S 3.1]{MTT}. We begin by noting that when $s=10$ we have $\delta > 1 - 1/7 - 1/11- \cdots - 1/p_{13} > 0.416$. If $5\nmid p-1$ then we have $\delta > 1 - 1/11  -\cdots - 1/p_{14} > 0.535$. This larger value of $\delta$ shows that (\ref{brown}) is true for all $p> 1.2\cdot 10^{15}$. We continue, with the choice $s=10$ to show that $5,\ldots, 19$ all divide $p-1$. Finally, with $s=11$ we conclude that $23|p-1$. 

Since $p-1 = m \cdot 2 \cdots 23$, and, since $2.5\cdot 10^{15} \leq p \leq 3.34\cdot 10^{15}$ we conclude that there are less than $4\cdot 10^{6}$ possible numbers which may not satisfy (\ref{brown}). Only 518 of these give rise to primes $p$ with $\omega(p-1) = 13$. For each of these possible exceptions we compute the \textit{exact} value of $\delta$, rather than merely a lower bound. We  then eliminate many of these cases by feeding these $\delta$'s into Theorem \ref{Harold}: this leaves a list of 25 possible exceptions the smallest of which is 
$$2,513,954,577,154,020.$$
Owing to the abundance of square-free primitive roots, we merely verify that each of these 25 numbers contains a square-free primitive root less than 100.

We note that our method allows one to prove $g^{\square}(p) < p^{\alpha}$ for all $p\geq p_{0}(\alpha)$, where $p_{0}(\alpha)$ is given explicitly, and where $\alpha>\frac{1}{2}$. For example, we are able to show that $g^{\square}(p) < p^{3/4}$  for all $p> 1.2\cdot 10^{34}$. 

We also note that in (\ref{clement}) we have used the
 P\'{o}lya--Vinogradov inequality
\begin{equation}\label{point}
\bigg| \sum_{M<n\leq N} \chi(n) \bigg| \leq \sqrt{p}\log p,
\end{equation}
valid for all non-principal characters $\chi(n)$ to the modulus $p$. The use of sharper versions of (\ref{point}) would improve (\ref{attlee}) and lead to an improvement in Theorem \ref{Main}.
Finally since $2$ is the only primitive root modulo $3$ one could not aim to produce a version of Theorem \ref{Main}, holding for all primes, with an exponent less than $\log 2/\log 3 = 0.6309\ldots$. 

\section{Square-full primitive roots}\label{conch}
Less is known about $g^{\blacksquare}(p)$, the number of square-full primitive roots modulo $p$. An integer $n$ is square-full if for all primes $l|n$ we have $l^{2}|n$. Let $N^{\blacksquare}(p,x)$ denote the number of square-full primitive roots modulo $p$ not exceeding $x$.  Shapiro \cite[p.\ 307]{Shapiro} proved that
\begin{equation}\label{vote}
N^{\blacksquare}(p,x) = \frac{\phi(p-1)}{p-1} \left\{ C x^{1/2} + O(2^{\omega(p-1)}  p^{1/6} (\log p)^{1/3} x^{1/3})\right\},
\end{equation}
where $C$ is an explicit constant. Liu and Zhang \cite[Thm 1.2]{LiuZhang05}   showed that
\begin{equation}\label{of}
N^{\blacksquare}(p,x) = \frac{\phi(p-1)}{p-1} \left\{ C x^{1/2} + O(p^{9/44 + \epsilon} x^{1/4 + \epsilon})\right\},
\end{equation}
whence $g^{\blacksquare}(p) \ll p^{9/11 + \epsilon}$. 

It would be interesting to estimate the size of $p_{0}$ such that $g^{\blacksquare}(p)< p$ for all $p\geq p_{0}$.
Shapiro's result (\ref{vote}) is insufficient to show this; Liu and Zhang's result (\ref{of}) is based on arguments out of which it would be difficult to derive explicit constants. We are grateful to Adrian Dudek who computed that $1,052,041$ is the largest prime $p$ with $p\leq 3\cdot 10^{6}$ that does not have a square-full primitive root less than $p$.


\begin{thebibliography}{99}

\bibitem{Cipu}
M.~Cipu
\newblock Further remarks on Diophantine quintuples.\newblock {\em Acta. Arith.}
\newblock 168(3), 201--219, 2015.


\bibitem{COT}
S.~D. Cohen, T.~Oliveira~e Silva, and T.~S. Trudgian.
\newblock On Grosswald's conjecture on primitive roots.
\newblock {\em Acta. Arith.}
\newblock 172(3), 263--270, 2016



\bibitem{Ha}
Ha, J.
\newblock On the least prime primitive root.
\newblock {\em J. Number Theory}
133(11), 3645--3669, 2013.



\bibitem{LiuZhang05}
Liu, H. and Zhang, W.
\newblock On the squarefree and squarefull numbers
\newblock {\em J. Math. Kyoto Univ.}, 45(2):247--255, 2005.

\bibitem{MTT}
K.~McGown, E.~Trevi\~{n}o, and T.~S. Trudgian.
\newblock Resolving Grosswald's conjecture on GRH.
\newblock {\em Funct. Approx.}
\newblock To appear.



\bibitem{Shapiro}
Shapiro, H. N.
\newblock {\em Introduction to the Theory of Numbers.} Pure and Applied Mathematics.
\newblock New York, John Wiley and Sons, Inc., 1983.


\bibitem{Shoup}
Shoup, V.
\newblock Searching for primitive roots in finite fields.
\newblock{\em  Math. Comp.} 58, 369--380, 1992.



\end{thebibliography}
\end{document}